\DeclareMathAlphabet{\mathpzc}{OT1}{pzc}{m}{it}
\def\E{\mathbb{E}}
\def\E{\mathbbmss{E}}
\def\transpose{\prime}
\def\bx{\bm{x}}
\def\bW{\bm{W}}
\def\bW{\bm{W}}
\def\bX{\bm{X}}
\def\CiD{\stackrel{d}{\longrightarrow}}
\def\CiP{\stackrel{p}{\longrightarrow}}
\newtheorem{theorem}{Theorem}
\newtheorem{lemma}[theorem]{Lemma}
\renewcommand{\qed}{\nobreak \ifvmode \relax \else
      \ifdim\lastskip<1.5em \hskip-\lastskip
      \hskip1.5em plus0em minus0.5em \fi \nobreak
      \vrule height0.75em width0.5em depth0.25em\fi}
\def\Pr{\text{Pr}}
\def\Ell{\mathcal{L}}
\renewcommand\appendix{%
\global\@topnum\z@
\ifdim\lastskip>0pt\vskip-\lastskip\fi
  \setcounter{section}{0}%
  \setcounter{subsection}{0}%
  \renewcommand\thesection{Appendix \@Alph\c@section}
  \renewcommand{\theequation}{\thesection.\arabic{equation}}
  \setcounter{equation}{0}
\vskip16pt plus 4pt minus 2pt
\noindent{\bfseries\center}
\nobreak}
\title{Variable Selection in Causal Inference Using Penalization}
\author[ ] {Ashkan Ertefaie \footnote{\tiny{\it Address for correspondence:} Department
of Statistics, University of Michigan, Ann Arbor, 48105, Michigan, USA. E-mail: ertefaie@umich.edu\\}}
 \address{University of Michigan,
Ann Arbor, USA. \smallskip} \email{}
\author[ ] {Masoud Asgharian}
\author[ ] {David A. Stephens}
\address{McGill University, Montreal, Canada.}
\begin{document}

\begin{abstract}
In the causal adjustment setting, variable selection techniques based on either the outcome or treatment allocation model can result in the omission of confounders or the inclusion of spurious variables in the propensity score.  We propose a variable selection method based on a penalized likelihood which considers the response and treatment assignment models simultaneously. The proposed method facilitates confounder selection in high-dimensional settings. We show that under some conditions our method attains the oracle property. The selected variables are used to form a double robust regression estimator of the treatment effect.
Simulation results are presented and  economic growth data are analyzed.

\keywords{Causal inference, Average treatment effect, Propensity score, Variable selection, Penalized likelihood, Oracle estimator.}

\end{abstract}

\section{Introduction} \label{intro}

In the analysis of observational data, when attempting to establish the magnitude of the causal effect of treatment (or exposure) in the presence of confounding, the practitioner is faced with certain modeling decisions that facilitate estimation.  Should one take the parametric approach, at least one of two statistical models must be proposed; (i) the \textit{conditional mean model} that models the expected response as a function of predictors, and (ii) the \textit{treatment allocation model} that describes the mechanism via which treatment is allocated to (or, at least, received by) individuals in the study, again as a function of the predictors \citep{rosenbaum1983central, robins2000marginal}.

Predictors that appear in both mechanisms (i) and (ii) are termed \textit{confounders}, and their omission from model (ii) is typically regarded as a serious error, as it leads to inconsistent estimators of the treatment effect. Thus practitioners usually adopt a conservative approach, and attempt to ensure that they do not omit confounders by fitting a richly parameterized treatment allocation model. 
The conservative approach, however, can lead to predictors of treatment allocation, but not response, being included in the treatment allocation model. The inclusion of such ``spurious" variables in model (ii) is usually regarded as harmless.  However, the typical reported forfeit for this conservatism is inflation of variance of the effect estimator \citep{greenland2008invited, schisterman2009overadjustment}.  This problem also applies to the conditional mean model, but is in practice less problematic, as practitioners seem to be more concerned with bias removal, and therefore more liable to introduce the spurious variables in model (ii).  Little formal guidance as to how the practitioner should act in this setting has been provided.

As has been conjectured and studied in simulation by \cite{brookhart2006variable}, it is plausible that judicious variable selection may lead to appreciable efficiency gains.  However, confounder selection methods based on either just the treatment assignment model, or just the response model, may fail to account for non-ignorable confounders which barely predict the treatment or the response, respectively \citep{crainiceanu2008adjustment}.  In this manuscript, we use the term \textit{weak confounder} for these variables. \cite{vansteelandt2010model} shows that confounder selection procedures based on AIC and BIC can be sub-optimal and introduce a method based on the focused information criterion (FIC) which targets the treatment effect by minimizing a prediction mean square error (see also the cross-validation method of \cite{brookhart2006semiparametric}). \cite{van2007super} introduces a \textit{Super Learner} estimator which is computed by selecting a candidate from a set of estimators obtained from different models using a cross-validation risk \citep{van2004cross, sinisi2007super}. 

\cite{van2010collaborative}  selects the sufficient and minimal variables necessary for the propensity score model to estimate an unbiased causal effect by inspecting the efficient influence function \citep{porter2011relative}.  \cite{de2011covariate} discusses the variance inflation caused by adding the spurious variables in the model and show that it may cause bias as well. Under some assumptions, they also characterize the minimal set of covariates needed for consistent estimation of the treatment effect. Bayesian adjustment for confounding (BAC) is another method introduced by \cite{wang2012bayesian}. BAC specifies a prior distribution for a set of  possible models which includes a dependence parameter, $w\in [1,\infty]$, representing the odds of including a variable in the outcome model given that the same variable is in the treatment mechanism model.  Assuming that we {\it know} a priori that all the predictors of the treatment are in fact confounders, then $w$ can be set to $\infty$ \citep{crainiceanu2008adjustment, zigler2013model}. However, in practice,  none of these methods can be used in high-dimensional settings where the number of covariates are larger than sample size.

It is known that {\it asymptotically} penalizing the conditional outcome model, given treatment and covariates, results in a valid variable selection strategy in causal inference. However, for small to moderate sample sizes this may result in missing weak non-ignorable confounders, which barely predict the outcome but strongly predict the treatment mechanism. The objective of this manuscript is to improve the small sample performance of the outcome penalization strategy while maintaining its asymptotic performance (Table \ref{tab:largep2}). We present a covariate selection procedure which facilitates the estimation of the treatment effect in the {\it high-dimensional} cases. We parametrize the conditional joint likelihood of the outcome and treatment given covariates such that penalizing this joint likelihood has the ability  to select even weak confounders, i.e., confounders which are non-ignorable even if they are barely associated with the outcome or treatment mechanism. This likelihood is just used to identify the set of important covariates, i.e., non-ignorable confounders and predictors of outcome,  and, in general, the estimated parameters do not have any causal interpretation. We derive the asymptomatic properties of the maximum penalized likelihood estimator using a method that does not require the second derivative of the joint density function.  We utilize the selected covariates to estimate the causal effect of interest using our proposed doubly robust estimator.

We restrict our attention to the \textit{unmediated} causal effect (where the effect of exposure on outcome is not mediated by an intermediate variable); in the presence of mediation, direct and indirect effects may not in general be identifiable \citep{robins1992identifiability, petersen2006estimation, robins2010identification, hafeman2010alternative}.


\section{Preliminaries \& Notation} \label{sec:prelim}
Let $Y(d)$ denote the (potential) response to treatment $d$, and let $D$ denote the treatment received. The observed response, $Y$, is defined as $DY(1)+(1-D)Y(0)$. We will assume three types of predictors:

\begin{itemize}

\item[(I)] \textit{treatment predictors} ($X_1$), which are related to treatment and not to outcome.

\item[(II)] \textit{confounders} ($X_2$), which are related to both outcome and treatment.

\item[(III)] \textit{outcome predictors} ($X_3$), which are related to outcome and not to treatment;
\end{itemize}
see the directed acyclic graph (DAG) in Figure \ref{fig:2conf}.
\begin{figure}[t]
\vspace{0.4 in}
\caption{ Covariate types: Type-I: $X_1$, Type-II: $X_2$ and Type-III: $X_3$.}
\vspace{0.4 in}
\centerline{
\xymatrix{
 X_2\ar[r]  \ar@/^2pc/[rr] & D \ar[r] & Y\\
  X_1 \ar[ur] & X_3 \ar[ur]\\
  }
}
\label{fig:2conf}
\end{figure}
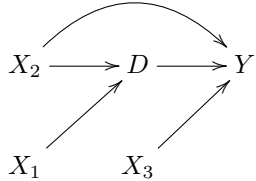
We restrict our attention here to the situation where each predictor can be classified into one of these three types, and to single time-point studies. In addition, as is usual, we will make the assumption of \textit{no unmeasured confounders}, that is, that treatment received $D$ and potential response to treatment $d$, $Y(d)$, are independent, given the measured predictors $X$.  In any practical situation, to facilitate causal inference, the analyst must make an assessment as to the structural nature of the relationships between the variables encoded by the DAG in Figure \ref{fig:2conf}.

\subsection{The Propensity Score for binary treatments}

The propensity score, $\pi$, for binary treatment $D$ is defined as $\pi(x) =
\Pr(D=1|x)$, where $x$ is a $p$-dimensional vector of (all) covariates.
\cite{rosenbaum1983central} show that $\pi$ is the coarsest function of
covariates that exhibits the balancing property, that is, $D \perp X
| \pi $. As a consequence, the causal effect $\mu = \E[Y(1)-Y(0)]$ can be
computed by iterated expectation
\begin{equation}
\mu = \E_{X}[\E\{Y(1)|X\}-\E\{Y(0)|X\}] = \E_{\pi}[\E\{Y(1)|\pi\}-\E\{Y(0)|\pi\}],
\label{eq:ATE}
\end{equation}
where $\E_{\pi}$ denotes the expectation with respect to the
distribution of $\pi$. For more details see \cite{rubin2008objective} and
\cite{rosenbaum2010causal}.


\medskip

\noindent \textbf{Remark 1:} In the standard formulation of the propensity score, no distinction is made between our three types of covariates.  Note that, however, for consistent estimation of $\mu$, \textit{it is not necessary to
balance on covariates that are not confounders}. Covariates $X_1$ that predict $D$ but not $Y$ may be unbalanced in treated and untreated groups, but will not affect the estimation of the effect of $D$ on $Y$, as $D$ will be conditioned
upon, thereby blocking any effect of $X_1$ \citep{de2011covariate}.  Covariates $X_3$ are unrelated to
$D$, so will by assumption be in balance in treated and untreated groups in the population. Therefore, the propensity score need only be constructed from confounding variables $X_2$; in this case, it is easy to see that the
propensity score, $\pi_{2}=\pi_2(x_2) $, say, is a balancing score in the sense that $D\perp X_2 \: | \: \pi_{2}$:  we have $\Pr(D=1\:|\:\pi_{2}(x_2)=t,X_2=x_2) = t =
\Pr(D=1\:|\:\pi_{2}(x_2)=t)$, independent of $x_2$, in the usual way.  Then, in the presence of outcome predictors $X_3$ of $Y$, the sequel to equation \eqref{eq:ATE}
takes the form
\begin{eqnarray}
\mu = \E[Y(1)-Y(0)] & = & \E_{X_2,X_3}[\E\{Y(1)|X_2,X_3\}-\E\{Y(0)|X_2,X_3\}] \nonumber\\[6pt]
& = & \E_{\pi_{2},X_3}[\E\{Y(1)|\pi_{2},X_3\}-\E\{Y(0)|\pi_{2},X_3\}].
\label{eq:ATE2}
\end{eqnarray}

\noindent \textbf{Remark 2:} Inclusion of covariates that are just related to the outcome in the propensity score model increases the covariance between the fitted $\pi$ and $Y$, decreases the variance of the estimated causal effect, in line with the simulation of \cite{brookhart2006variable}.  

\subsection{Penalized Estimation}


In a given parametric model, if $\eta$ is a $r$-dimensional regression coefficient,  $p_{\lambda}(.)$ is a penalty function and $l_m(\eta)$ is the negative log-likelihood, the maximum penalized likelihood (MPL) estimator $\widehat \eta_{ml}$ is defined as
\begin{align*}
\widehat \eta_{ml}=\arg \min_{\eta} \left [l_m(\eta)+n\sum_{j=1}^r p_{\lambda}(|\eta_j|) \right ].
\end{align*}
MPL estimators are shrinkage estimators, and as such, they have more bias, though less variation than unpenalized analogues. Commonly used penalty functions include LASSO \citep{tibshirani1996regression}, SCAD
\citep{fan2001variable}, EN \citep{zou2005regularization} and HARD \citep{antoniadis1997wavelets}. 

The remainder of this paper is organized as follows. Section \ref{sec:penalized} presents our two step variable selection and estimation procedure; we establish its theoretical properties.  The performance of the proposed method is studied via simulation in Section \ref{sec:simulation}. We analyze a real data set in Section \ref{sec:application}, and Section \ref{sec:conclude} contains concluding
remarks.  All the proofs are relegated to the Appendix.

\section{Penalization and Treatment Effect Estimation} \label{sec:penalized}

In this section, we develop the methodology which facilitates the estimation of the treatment effect in high-dimensional cases.  We separate the covariate selection and the treatment effect estimation procedure. First, we present a reparametrized penalized likelihood which is used to identify the important covariates, and establish the theoretical properties of the resulting MPL estimators. Note that since the likelihood is reparametrized the MPL estimators do not have any causal interpretation. Second, the treatment effect estimation is performed using our doubly robust estimator with the selected covariates in the previous step.

\subsection{Likelihood construction}

Consider the parametric likelihood $\Ell(\eta;y,d,x)$ proportional to
\begin{align}
\prod_{i=1}^n  f(y_i|d_i,g(x_i;\alpha), \beta ) P(D=1|h(x_i,\alpha))  ^{d_i}
P(D=0|h(x_i,\alpha))^{1-d_i},
\label{eq:Hahn}
\end{align}
where $\beta$ is an $r_1$-dimensional vector parametrizing the association between the outcome and the treatment and $\alpha$ is an $r_2$-dimensional vector containing parameters that appear in the model for $Y(d)|X$ and $D|X$. The functions $g()$ and $h()$ used in our joint likelihood have the same form as one would use when modeling the outcome model and treatment mechanism separately. For example, assuming  linear working models, $g(\bx;\alpha)=\sum_{j=1}^{r2} \alpha_j x_j$ and $h(\bx;\alpha)=\sum_{j=1}^{r2} \alpha_j x_j$. Note that for each $j$, the parameter $\alpha_j$ corresponding to $x_j$ is the same in both models. This is why we call (\ref{eq:Hahn}) a reparametrized likelihood. We explain the rational behind this reparametrization in section 3.2. 

Since our goal is to select the minimal set of covariates necessary  for a consistent estimation of the causal effect, we impose a penalty on the parameters $\alpha$ only; there is no penalization of the $\beta$ parameters. The penalized pseudo-density for $z_i=(y_i,d_i,x_i)$ is $f_p(z_i,\eta)
=f(z_i;\eta) f(\alpha)$, where $f(z_i;\eta)$ is the joint density used in (\ref{eq:Hahn}) and $f(\alpha)=\exp\{-p_{\lambda_n}(\alpha)\}$. Accordingly, the
MPL estimator, $\widehat \eta$, can be defined by
\[
\widehat \eta =  \arg \sup_{\eta}\prod_{i=1}^n f_p(z_i; \eta) = \arg \sup_{\eta} \sum_{i=1}^n \log f_p(z_i; \eta).
\]
Note the joint density (\ref{eq:Hahn}) is a misspecification of the true data density. As such, the corresponding penalized likelihood just  checks whether $\alpha_k=0$ for $k=1,...,r_2$, and is not for other estimation purposes. This is discussed in detail in the following subsection.

\subsection{Avoiding omission of confounders during selection}

Standard variable selection techniques based on the conditional outcome/treatment model have the tendency to omit important confounders by ignoring covariates that are weakly associated with the outcome/treatment but strongly associated with treatment/outcome \citep{vansteelandt2010model}. However, our likelihood parametrization in (\ref{eq:Hahn}),  which has the parameter $\alpha$ in both response and propensity score models, allows us to select such weak confounders. More specifically, our parametrization gives each covariate two chances to appear in the model; once in the response model and once in the treatment allocation model and thus considers both the covariate-exposure and the covariate-outcome association.  Our reparametrization has a drawback of setting $\alpha=0$ if the tradeoff between the value of the coefficient in the two parts of the likelihood somehow cancel out. In other words, when  the association parameter of a variable with the outcome and treatment have opposite signs, then for  particular association values, the reparametrized likelihood sets the parameter corresponding to the variable to zero.  However, in \ref{app:measure}, we show that this particular data generating low has zero measure.

Our proposed parametrization, however, has another drawback that needs to be taken care of. Figure \ref{fig:mislike-perf} shows that this strategy sets $\alpha \neq 0$ if a covariate is related to either the outcome or treatment. This figure presents a case where there is just one covariate and the coefficient of this covariate in outcome and treatment models are $1/\sqrt n$ and $0.3$, respectively, where $n$ is the sample size. As it is expected, the estimated parameter $\alpha$ corresponding to this covariate does not converge to zero when estimated using the likelihood (\ref{eq:Hahn}) as sample size increases.  Hence, our parametrization gives an equal chance to Type-I and Type-III covariates for selection as key covariates. This may result in over-representing the Type-I variables which is against our goal of keeping variables which are either predictors of the response or non-ignorable confounders. To deal with this problem, we introduce the {\it boosting} parameter $\nu$ which boosts covariates Type-III relative to Type-I. The boosting parameter can be defined as $\nu= \frac{1}{| \tilde \alpha_Y|(1+| \tilde \alpha_D|)}$, where $\tilde \alpha_Y$ and $\tilde \alpha_D$ are the least squares (or ridge) estimate of the parameters in the response and treatment models, respectively. Our
penalty function is proportional to the boosting parameter,
$$p_{\lambda_n}(.)=\nu p_{\lambda_n}^*(.), $$ where $p_{\lambda_n}^*(.)$ is a
conventional penalty function. Therefore, the magnitude of the penalty on each
parameter is proportional to its contribution to the response
model.  Note that as $\tilde \alpha_Y \rightarrow 0$, our penalty function puts more penalty on  the parameters while considering the covariate-treatment association. For example, when a covariate barely predicts the outcome and treatment, our proposed penalty function imposes a stronger penalty on the parameter compared to a case where a covariate barely predicts the outcome and is strongly related to treatment.  For example, when $p_{\lambda_n}^*(.) $ is lasso, our penalty is $p_{\lambda_n}(|\alpha_j|)= \lambda_n \nu_j |\alpha_j| $.
A similar argument can be found in the adaptive LASSO \citep{zou2006adaptive}.

\begin{figure}[t]
\centering
\includegraphics[scale=.6]{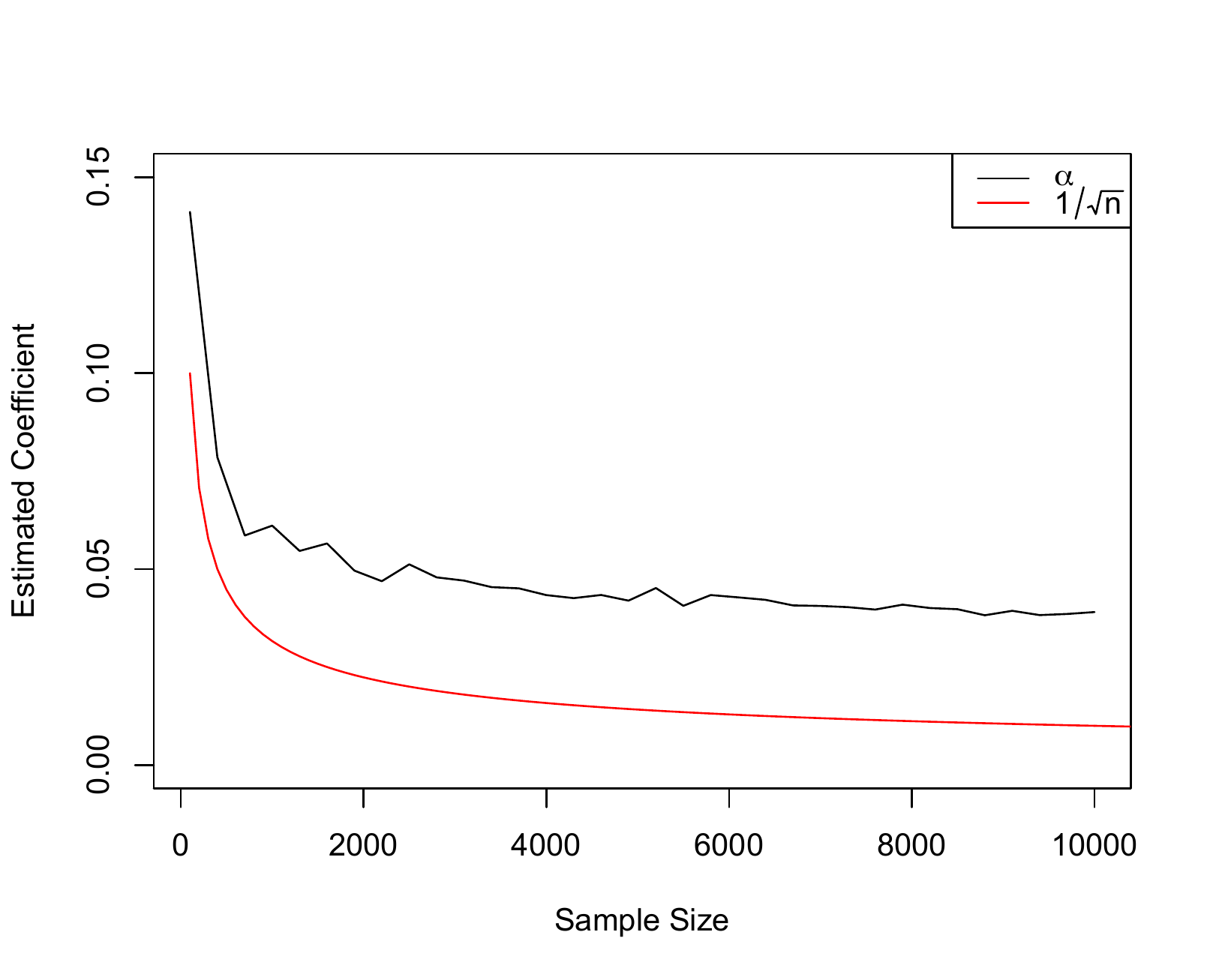}
\caption{{Performance of the misspecified likelihood for different sample sizes $n$. Red and black  lines are $1/\sqrt n$ and the estimated coefficient $\alpha$ using the reparametrized likelihood.  }} 
\label{fig:mislike-perf}
\end{figure} 

\subsection{Main Theorems} \label{theorems}


The following conditions guarantee a consistent
penalized estimating procedure for the parameter $\eta$ with respect to the likelihood $(\ref{eq:Hahn})$ which sets the small coefficients to zero for
covariate selection.
\begin{itemize}

\item[P1.]     For all $n$, $p_{\lambda_{n}}(0)=0$ and $p_{\lambda_{n}}(\alpha)$ is non-negative, symmetric about 0 and it is non-decreasing on both $\mathcal{R}^{+}$ and $\mathcal{R}^{-}$, i.e. on positive and negative half line. Moreover, it is twice differentiable with derivatives $p_{\lambda_{n}}^{'}(\alpha)$ and $p_{\lambda_{n}}^{''}(\alpha)$ exist everywhere except at $\alpha = 0$.

\item[P2.] As $n \rightarrow \infty$, $\max_{\alpha\neq0}[ p''_{\lambda_n}(\alpha) ] \rightarrow 0$ and
$\max_{\alpha\neq0}[ \sqrt np'_{\lambda_n}(\alpha) ] \rightarrow 0$.

\item[P3.] For $N_n \equiv (0,B_n)$, $\text{lim $\inf\limits_{\alpha \in \:
    N_n}$}  p'_{\lambda_n}(\alpha) = \infty$, where $B_n \rightarrow 0$ as $n \rightarrow \infty$.
\end{itemize}
Assumption P1 is used to prove Theorem \ref{th:PMLEexists} given in \ref{app:PMLEexists}, while P2 prevents
the $j$th element of the penalized likelihood from being dominated by the
penalty function since it vanishes when $n \rightarrow \infty$. If
$\alpha_{j}=0$, condition P3 allows the penalty function to dominate the
penalized likelihood which leads to the sparsity property.


Suppose the  $r$-dimensional vector of parameters $\eta_0=(\eta_{01},\eta_{02}=0)$ is the true values of the parameter $\eta$, such that
$\eta_{02}=(\eta_j) = 0$ for $j=s+1,...,r$; $s$ denotes the true number of
predictors present in the model ({\it exact sparsity} assumption).  Note that since there is no penalty on the
$\beta$s, $\eta_{02}$ consists of those $\alpha$ that should be shrunk to zero
($\alpha_j=0$ for $j=s',...,r_2$).  Let $\widehat \eta=(\widehat \eta_1,\widehat \eta_2)$ be the vector of MPL estimators  corresponding to \eqref{eq:Hahn}.

Theorem \ref{th:PMLEexists} in \ref{app:PMLEexists} establishes the existence of the consistent penalized maximum likelihood estimator with respect to the
joint likelihood (\ref{eq:Hahn}) under standard regularity conditions (\cite{ibragimov1981statistical}) given as C1-C4 in \ref{app:conditions}.


The next theorem proves the sparsity and asymptotic normality of  the MPL estimators. Let $I(\eta)$
be the Fisher information matrix derived from the constructed likelihood.


\medskip
\begin{theorem} \textbf{(Oracle properties)}
Suppose assumptions C1-C4 and P1-P3 are fulfilled and further $\det [I(\eta)] \neq 0$ for $\eta \in \Xi$. Then
\begin{enumerate}
\item[(a)] $\Pr(\widehat \eta_2=0)\rightarrow 1$ as $n\rightarrow \infty$
\item[] Under additional assumption  $C5$, 
\label{th:norm}
\item[(b)] $\sqrt n(\widehat \eta_{01}- \eta_{01}) \CiD N(0,I^{-1}(\eta_{01})),$
\end{enumerate}
where $\eta_{01}=(\beta,\alpha_{01})$ and $\alpha_{01}$ is the true vector of
non-zero coefficients.
\end{theorem}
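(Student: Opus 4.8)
\medskip
\noindent\textbf{Proof proposal.} The plan is to run the oracle‑property argument of \cite{fan2001variable}, modified in two ways dictated by our setting: since the working likelihood \eqref{eq:Hahn} is misspecified, $\eta_0$ is read as the Kullback--Leibler projection of the data law onto the reparametrized model and $I(\cdot)$ as the information of the \emph{constructed} likelihood at that pseudo‑true value; and since we want to avoid second derivatives of the joint density, the local analysis is carried out through the normalized log‑likelihood‑ratio process in the manner of \cite{ibragimov1981statistical} rather than by Taylor‑expanding a score. Write $\ell_n(\eta)=\sum_{i=1}^n\log f(z_i;\eta)$ and let the penalized objective be $Q_n(\eta)=\ell_n(\eta)-n\sum_k p_{\lambda_n}(|\alpha_k|)$, the sum running over the $\alpha$‑coordinates only. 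By Theorem \ref{th:PMLEexists} there is a $\sqrt n$‑consistent maximizer, so for every $\varepsilon>0$ there is a $C$ with $\Pr(\mathcal{A}_n)\ge 1-\varepsilon$ for $\mathcal{A}_n=\{\|\widehat\eta-\eta_0\|\le C/\sqrt n\}$; all statements below are relative to $\mathcal{A}_n$.

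\noindent\textbf{Part (a).} Fix a null index $j$, i.e.\ a coordinate of $\eta_2$, so that the true value of the corresponding $\alpha$ is $0$; on $\mathcal{A}_n$ we have $|\widehat\eta_j|\le C/\sqrt n$, which for $n$ large lies in the interval $N_n=(0,B_n)$ on which P3 forces $p'_{\lambda_n}$ to blow up (this is exactly what P3 buys: the blow‑up region reaches down to the $n^{-1/2}$ scale). It suffices to show that, with probability tending to one and uniformly over $\{\eta:\|\eta-\eta_0\|\le C/\sqrt n\}$, $\partial Q_n(\eta)/\partial\eta_j$ has the sign of $-\mathrm{sgn}(\eta_j)$ for $0<|\eta_j|<B_n$; then $Q_n$ is strictly decreasing in $|\eta_j|$ there, its maximizer must satisfy $\widehat\eta_j=0$, and a union bound over the finitely many null coordinates gives $\Pr(\widehat\eta_2=0)\to1$. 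Now $\partial Q_n/\partial\eta_j=\partial_{\eta_j}\ell_n(\eta)-n\,p'_{\lambda_n}(|\eta_j|)\,\mathrm{sgn}(\eta_j)$. From C1--C4 the normalized log‑likelihood‑ratio process $u\mapsto\ell_n(\eta_0+u/\sqrt n)-\ell_n(\eta_0)$ obeys Ibragimov--Has'minskii‑type increment bounds, which yield $\sup_{\|\eta-\eta_0\|\le C/\sqrt n}|\partial_{\eta_j}\ell_n(\eta)|=O_p(\sqrt n)$ without invoking any second derivative of $f$. By P3, $n\,p'_{\lambda_n}(|\eta_j|)\ge n\inf_{N_n}p'_{\lambda_n}$ and $\sqrt n\inf_{N_n}p'_{\lambda_n}\to\infty$, so the penalty term swamps the score term and the sign claim follows. (With the adaptive‑type penalty of Section 3.2 the weight $\nu_j$ on a null coordinate is of order $\sqrt n$, since $\tilde\alpha_Y$ is $O_p(n^{-1/2})$ there, which only reinforces the domination.)

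\noindent\textbf{Part (b).} On the event from (a), $\widehat\eta=(\widehat\eta_1,0)$ and $\widehat\eta_1$ maximizes $\eta_1\mapsto\ell_n(\eta_1,0)-n\sum_{j\le s}p_{\lambda_n}(|\eta_{1j}|)$, the penalized version of the oracle model obtained by fixing $\eta_2=0$. Put $u=\sqrt n(\eta_1-\eta_{01})$ and consider
\[
\mathbb{Z}_n(u)=\ell_n(\eta_{01}+u/\sqrt n,0)-\ell_n(\eta_{01},0)-n\sum_{j\le s}\bigl\{p_{\lambda_n}\bigl(|\eta_{01,j}+u_j/\sqrt n|\bigr)-p_{\lambda_n}\bigl(|\eta_{01,j}|\bigr)\bigr\}.
\]
By the local asymptotic normality furnished by C1--C5, the likelihood part expands on compacts as $u^\transpose\Delta_n-\tfrac12 u^\transpose I(\eta_{01})u+o_p(1)$ with $\Delta_n\CiD\Delta\sim N(0,I(\eta_{01}))$, where $I(\eta_{01})$ is the information of the oracle model at the pseudo‑true value and the remainder is $o_p(1)$ uniformly on compacts with no recourse to $\ddot\ell$. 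The penalty increment is $o_p(1)$ uniformly on compacts: expanding at the fixed nonzero values $|\eta_{01,j}|$ bounds it by $\sqrt n\max_{j\le s}p'_{\lambda_n}(|\eta_{01,j}|)\,\|u\|+\max_{j\le s}p''_{\lambda_n}(|\eta_{01,j}|)\,\|u\|^2$, and both factors vanish by P2. Hence $\mathbb{Z}_n(\cdot)$ has the strictly concave quadratic limit $u^\transpose\Delta-\tfrac12 u^\transpose I(\eta_{01})u$, whose unique maximizer is $I^{-1}(\eta_{01})\Delta\sim N(0,I^{-1}(\eta_{01}))$. Combining this with the Ibragimov--Has'minskii tail estimates (again from C1--C5) that keep $\widehat u_n=\sqrt n(\widehat\eta_1-\eta_{01})$ tight, the argmax continuous‑mapping theorem gives $\widehat u_n\CiD I^{-1}(\eta_{01})\Delta$, which is the claim.

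\noindent\textbf{Main obstacle.} The crux is importing the Ibragimov--Has'minskii apparatus --- the uniform‑over‑$\sqrt n$‑ball score bound needed in (a), and the LAN expansion plus argmax‑localization needed in (b) --- into the \emph{penalized} problem for a \emph{misspecified} likelihood, so that nothing uses $\ddot\ell$ and everything is anchored at the Kullback--Leibler projection $\eta_0$ instead of a genuine true parameter. Two further points need care: the kink of $p_{\lambda_n}$ at the origin, so that the stationarity statements near null coordinates are subgradient rather than derivative statements; and the randomness of the boosting weights $\nu_j$, for which one must confirm that the data‑dependent penalty still satisfies P2 at the nonzero coordinates and the P3‑type blow‑up at the null ones, so that the order comparisons in (a) and the negligibility in (b) survive. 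Finally, in the misspecified case one must check that the regularity conditions are strong enough that the curvature and the score covariance appearing in the LAN expansion coincide, so that the limiting variance collapses to $I^{-1}(\eta_{01})$ rather than a sandwich.
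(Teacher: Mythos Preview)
Your proposal is correct and, for part~(b), essentially coincides with the paper's route: both write the LAN expansion of the penalized log‑likelihood at $\eta_{01}$ via the Ibragimov--Has'minskii machinery (so that no second derivative of $f$ is touched), kill the penalty contribution on the nonzero block using P2, and read off the normal limit $N(0,I^{-1}(\eta_{01}))$; you phrase the last step as an argmax continuous‑mapping, while the paper phrases it as $\sqrt n(\widehat\eta_{01}-\eta_{01})-n^{-1/2}I^{-1}(\eta_{01})\sum\partial_{\eta_{01}}\log f(z_i;\eta_{01})\to 0$ in probability, but these are the same conclusion.

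For part~(a) the packaging differs. You run the Fan--Li sign‑of‑the‑score argument on each null coordinate: $\partial_{\eta_j}\ell_n=O_p(\sqrt n)$ uniformly on a $C/\sqrt n$‑ball, while $n\,p'_{\lambda_n}(|\eta_j|)/\sqrt n\to\infty$ by P3, so $\partial_{\eta_j}Q_n$ has sign $-\mathrm{sgn}(\eta_j)$ and the maximizer sits at $0$. The paper instead works with the penalized likelihood ratio, factoring
\[
\prod_i\frac{f_p(z_i;\eta_1,\eta_2)}{f_p(z_i;\widehat\eta_1,0)}
=\prod_i\frac{f_p(z_i;\eta_1,\eta_2)}{f_p(z_i;\eta_1,0)}\cdot\prod_i\frac{f_p(z_i;\eta_1,0)}{f_p(z_i;\widehat\eta_1,0)}<\prod_i\frac{f_p(z_i;\eta_1,\eta_2)}{f_p(z_i;\eta_1,0)}<1,
\]
and proving the last inequality by a dedicated lemma which expands $\log R_n(\eta_2)$ via Ibragimov--Has'minskii Theorem~II.1.1 as $\{\sum_i\partial_{\eta_2}\log f_p(z_i;\eta_1,0)\}\|\eta_2\|-n\sum p'_{\lambda_n}(|\eta_j|)-\tfrac12\eta_2^\transpose I(\eta_1,0)\eta_2+o_p(1)$, then invokes P3 to make the penalty dominate the $O_p(n)$ score. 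The underlying mechanism---penalty derivative on a null coordinate swamps the score---is literally the same; your version is the coordinate‑wise (Fan--Li) reading, the paper's is the global likelihood‑ratio reading that dovetails with the Hellinger‑distance lemma already used in the existence theorem.

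Two minor remarks. First, Theorem~\ref{th:PMLEexists} as stated gives only almost‑sure consistency, not the $\sqrt n$‑rate you invoke to place $|\widehat\eta_j|$ inside $N_n$; the rate is a by‑product of the same LAN apparatus you use in~(b), so this is easy to supply but should be said. Second, the concerns you raise in your ``main obstacle'' paragraph---the kink at the origin, the data‑dependent boosting weights $\nu_j$, and whether the limiting variance is $I^{-1}$ rather than a sandwich under misspecification---are not addressed in the paper's proof either; you are being more scrupulous than the source.
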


 \noindent \textbf{Remark 3:}  As long as the postulated response and treatment model identify the true non-zero coefficients in each model as $n \rightarrow \infty$, the proposed variable selection method consistently identifies the set of non-ignorable confounders. Assuming linear working models, a sufficient but {\it{not}} necessary condition for selecting non-ignorable confounders is the linearity of the true models in their parameters. In \ref{app:sim}, we conducted simulation studies under different misspecification scenarios where the true models are non-linear in parameters and working models are linear.

\subsection{Choosing the Tuning Parameter} \label{tuning parameter}

We select the tuning parameter using the {\it Generalized Cross Validation}
(GCV) method suggested by \cite{tibshirani1996regression} and
\cite{fan2001variable}. Let $\bW=(D,\bX)$, then
\[
\text{GCV}(\lambda)=\frac{ \text{RSS}(\lambda)/n}{[1-d(\lambda)/n]^2},
\]
where $\text{RSS}(\lambda)=||Y-\bW \widehat \eta||^2$, $d(\lambda) =
\text{trace} [\bX(\bX^\transpose \bX + n\Sigma_{\lambda}(\widehat
\eta))^{-1})\bX^\transpose]$ is the effective number of parameters and
$\Sigma_{\lambda}( \eta) =
\text{diag}[p^\prime_{\lambda}(|\eta_1|)/|\eta_1|,...,p^\prime_{\lambda}(|\eta_{r_2}|)/|\eta_{r_2}|]$.   The selected tuning parameter $\widehat \lambda$ is defined by $\widehat
\lambda= \text{arg}\min_{\lambda} \text{GCV}(\lambda)$.

\subsection{Estimation}

In the treatment effect estimation, we fit the following model using the set of selected covariates in the previous step.  Note that a user may want to use other causal adjustment models such as IPTW or propensity score matching.

Our model is a slight modification of the conventional propensity score regression approach of \cite{RobinsMarkNewey1992}, and specifies
\begin{align}
\E[Y_i|S_i=s_i, \bX_i=\bx_i]=\theta s_i+ g(\bx;\gamma), \label{eq:PSR5}
\end{align}
where $S_i=D_i-\E[D_i|x_i] = D_i -\pi(\bx_i)$, $g(\bx;\gamma)$ is a function of
covariates and $\pi$ is the propensity score. The quantity $S_i$ is used in
place of $D_i$; if $D_i$ is used the fitted model may result in a biased estimator for $\theta$ since $g(x;\gamma)$ may be incorrectly specified. By defining $S_i$ in this way, we restore $\text{cor}[S_i,X_{ij}]=0$ for
$j=1,2,..,p$ where $p$ is the number of the selected variables (if $\pi(x_i) = \E[D_i|x_i]$ is correctly specified), as $\pi(x_i)$ is the (fitted) expected value of $D_i$, and hence $\bx_j^\transpose (D - \pi(x) )=0$, where $\bx_j^\transpose = (x_{1j},\ldots,x_{nj})$. Therefore,
misspecification of $g(.)$ will not result in an inconsistent estimator of $\theta$.

In general, this model results in a \textit{doubly robust} estimator (see \cite{davidian2005semiparametric}, \cite{Schafer2005} and \cite{bang2005doubly}); it yields a consistent estimator of $\theta$ if \textit{either} the propensity score model or conditional mean model \eqref{eq:PSR5} is correctly specified, and is the most efficient estimator (\cite{tsiatis2006semiparametric}) when both are correctly specified. For additional details on the related asymptotic and finite sample behavior, see \cite{kang2007demystifying}, \cite{neugebauer2005prefer}, \cite{van2003unified}
and \cite{robins1999robust}.

 The model chosen for estimation of the treatment effect is data dependent. Owing to the inherited uncertainty in the selected model, making statistical inference about the treatment effect becomes ``post-selection inference". Hence, inference about the treatment effect obtained in the estimation step needs to be done cautiously. The weak consistency of the estimator results from the following theorem.
\begin{theorem}
Let $\zeta(\hat \theta_{M_n},M_n)$ be a smooth function of $\hat \theta_{M_n}$ where $M_n$ is a set of selected variables using our method. Then $\zeta(\hat\theta_{M_n},M_n) \CiP \zeta(\theta_{M_0},M_0)$ as $n \rightarrow \infty$ where $M_0$ is the set of non-zero coefficients.
\label{th:consis}
\end{theorem}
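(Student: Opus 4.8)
The plan is to reduce this post-selection statement to two ingredients: (i) the selection step recovers the true model with probability tending to one, $\Pr(M_n=M_0)\to 1$; and (ii) on the event $\{M_n=M_0\}$ the estimation-step estimator coincides with the ``oracle'' doubly robust estimator built from $M_0$, which is weakly consistent for $\theta_{M_0}$. Given these, fix $\epsilon>0$. Since $\zeta(\hat\theta_{M_n},M_n)=\zeta(\hat\theta_{M_0},M_0)$ on $\{M_n=M_0\}$,
\[
\Pr\bigl(|\zeta(\hat\theta_{M_n},M_n)-\zeta(\theta_{M_0},M_0)|>\epsilon\bigr)\le \Pr(M_n\neq M_0)+\Pr\bigl(|\zeta(\hat\theta_{M_0},M_0)-\zeta(\theta_{M_0},M_0)|>\epsilon\bigr),
\]
and the first term vanishes by (i) while the second vanishes by (ii) together with the smoothness of $\zeta(\cdot,M_0)$ and the continuous mapping theorem. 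The only nuisance here is that $M_n$ is random, hence $\hat\theta_{M_n}$ lives on a random model space; the event decomposition above is exactly what makes that harmless.

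For (i) I would combine the two parts of Theorem~\ref{th:norm} with the consistency of Theorem~\ref{th:PMLEexists}. Part (a) gives $\Pr(\hat\eta_2=0)\to 1$, i.e. every truly zero $\alpha$-coordinate is shrunk to zero asymptotically; the consistency $\hat\eta_1\CiP\eta_{01}$ (a fortiori the asymptotic normality in part (b) under C5), together with the fact that all coordinates of $\eta_{01}$ are nonzero by construction, gives $\Pr(\hat\eta_{01,j}\neq 0)\to 1$ for each of the finitely many truly nonzero coordinates, so no important covariate is dropped. Since $M_n$ is the support of $\hat\alpha$ and $M_0$ the support of $\alpha_0$, intersecting these finitely many events yields $\Pr(M_n=M_0)\to 1$.

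For (ii), note that on $\{M_n=M_0\}$ the estimator $\hat\theta_{M_n}$ equals $\hat\theta_{M_0}$, obtained by fitting \eqref{eq:PSR5} with covariate set $M_0$: a first-stage estimate $\hat\pi$ of the propensity score from the confounders in $M_0$, followed by the least-squares regression of $Y$ on $S=D-\hat\pi$ and $g(\bx;\gamma)$. The second-stage estimator is a smooth function of sample averages that converge in probability by the law of large numbers (with the $\sqrt n$-consistency of $\hat\pi$ used to control the plug-in), so $\hat\theta_{M_0}$ has a well-defined probability limit; the double-robustness argument following \eqref{eq:PSR5}---in particular the orthogonality $\bx_j^\transpose(D-\pi(\bx))=0$ restored by using $S$ in place of $D$---identifies this limit as $\theta_{M_0}$ even when $g(\cdot)$ is functionally misspecified, since $M_0$ contains all confounders. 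Applying the continuous mapping theorem to the smooth map $\zeta(\cdot,M_0)$ then gives $\zeta(\hat\theta_{M_0},M_0)\CiP\zeta(\theta_{M_0},M_0)$.

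The main obstacle is step (ii): showing rigorously that the two-stage plug-in estimator $\hat\theta_{M_0}$ is consistent without assuming the conditional-mean model is correctly specified. This requires making the orthogonalization step precise and verifying the moment and smoothness conditions under which the second stage behaves like a standard $M$-estimator with an estimated nuisance parameter. A secondary point to handle carefully is that the claim is only weak consistency: no control of $\hat\theta_{M_n}$ is needed on the negligible event $\{M_n\neq M_0\}$, whereas an $L^p$ or almost-sure version would demand uniform integrability or rate conditions that are not assumed here.
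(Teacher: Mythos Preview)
Your proposal is correct and follows essentially the same route as the paper: both arguments hinge on the oracle property $\Pr(M_n=M_0)\to 1$ from Theorem~\ref{th:norm} together with consistency of the oracle-model estimator $\hat\theta_{M_0}$ passed through the smooth map $\zeta(\cdot,M_0)$. The paper inserts a triangle-inequality step before the event decomposition, whereas you go directly to the bound $\Pr(|\zeta(\hat\theta_{M_n},M_n)-\zeta(\theta_{M_0},M_0)|>\epsilon)\le \Pr(M_n\neq M_0)+\Pr(|\zeta(\hat\theta_{M_0},M_0)-\zeta(\theta_{M_0},M_0)|>\epsilon)$; this is a cosmetic difference only, and your version is arguably cleaner. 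You are also more explicit than the paper in spelling out why $\Pr(M_n=M_0)\to 1$ requires \emph{both} parts of Theorem~\ref{th:norm} (zeros killed and nonzeros retained) and in flagging that the consistency of the two-stage doubly robust estimator $\hat\theta_{M_0}$ is the real analytic content of step~(ii); the paper simply asserts the latter via differentiability of $\zeta$.
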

Although, in this paper, we do not derive the asymptotic variance of the treatment effect estimator, in the simulation section, we provide some empirical results about the performance of a bootstrap estimator which is based on a method introduced by \cite{chatterjee2011bootstrapping}.

\subsection{The Procedure Summary}

The penalized treatment effect estimation process explained in sections 3.1 to 3.5 can be summarized as follows: \vspace{-.1in}
\begin{enumerate}
\item Estimate the vector of parameter $\hat \eta$ as $ \arg \sup_{\eta} \sum_{i=1}^n \log f_p(z_i; \eta)$ where $f_p(.)$ is defined in section 3.1.
\item Using the covariates with $\eta \neq 0$, fit a propensity score $\pi(\bX)$. 
\item Define a random variable $S_i=D_i-\pi(\bX_i)$ and fit the response model $\E[Y_i|d,\bx]=\theta s_i+ g(\bx_i;\gamma).$
The vector of parameters $(\theta,\gamma)$ is estimated using standard least square method. For simplicity, we assume the linear working model for $g(\bx_i;\gamma)=\gamma^\transpose\bx_i$. The design matrix $\bX$ includes a subset of variables with  $\eta \neq 0$.
\end{enumerate}


\section{Simulation Studies} \label{sec:simulation}
In this section, we study the performance of our proposed variable selection method using  simulated data when the number of covariates ($r_2$) is larger than the sample size. This also includes a scenario in which there is a weak non-ignorable confounder that is strongly related to the treatment but weakly to the outcome. We consider linear working models for both $g()$ and $h()$ functions throughout this section.

\begin{table}
\caption{\label{tab:largep} Performance of the proposed method when $r_2>n$ and in the presence of a weak confounder. S.D$^{emp}$: empirical standard error; S.D$^{tb}$: sandwich standard error.}\centering
\begin{tabular}{lrrrr|rrrr|} \hline

Method   &\multicolumn{1}{c}{Bias} & \multicolumn{1}{c}{S.D$^{emp}$} & \multicolumn{1}{c}{S.D$^{tb}$} & \multicolumn{1}{c|}{MSE} &
\multicolumn{1}{c}{Bias} & \multicolumn{1}{c}{S.D$^{emp}$} & \multicolumn{1}{c}{S.D$^{tb}$}& \multicolumn{1}{c}{MSE}  \\ \hline
Scenario 1.               & \multicolumn{3}{c}{$n=300$} & \multicolumn{3}{c}{$n=500$} \\
SCAD  & 0.010 &0.515 &0.502&0.266&0.012 &0.386 &0.381&0.149  \\
LASSO    &    0.067 &0.522&0.509&  0.277&0.057 &0.425&0.421 & 0.184          \\
PS-fit   &          0.164 &5.575& --&31.104&0.101&4.295 &--&18.453  \\
Oracle   &          0.017 &0.510& --& 0.260&0.007& 0.373 &--& 0.139 \\ \hline
Scenario 2.               & \multicolumn{3}{c}{$n=300$} & \multicolumn{3}{c}{$n=500$} \\
SCAD   &0.062 &0.606 &0.592& 0.372&0.019 &0.483  &0.456&0.234  \\
LASSO    &    0.037 &0.612& 0.593 &0.375&0.012 &0.481 & 0.460&0.232         \\
Y-fit   &         0.710 &0.598  &--&0.862&0.818&0.453 &-- &0.875 \\
PS-fit   &          0.381 &6.722 &--&45.326&0.094&5.117 & --&26.189  \\
Oracle   &         0.045 &0.638 &--& 0.409&0.018& 0.459  &--&0.211 \\ \hline
\end{tabular}
\end{table}

We generate 500 data sets of sizes 300 and 500 from the following two models:
\begin{enumerate}
\item[1.] $D \sim \text{Bernoulli}\left(\dfrac{\exp\{0.5x_1+0.5x_6-0.5x_7-0.5x_8
\}}{1+\exp\{0.5x_1+0.5x_6-0.5x_7-0.5x_8\}}\right)$ \\
$Y \sim \text{Normal}(d+2x_1+0.5x_2+5x_3+5x_4, 2)$
\item[2.] $D \sim \text{Bernoulli}\left(\dfrac{\exp\{0.5x_1+x_2+0.5x_6-0.5x_7-0.5x_8}
    {1+\exp\{0.5x_1+x_2+0.5x_6-0.5x_7-0.5x_8\}}\right)$, \\
$Y \sim \text{Normal}(d+2x_1+0.2x_2+5x_3+5x_4, 2)$
\end{enumerate}
where $\bX_k$ has a $N(1,2)$ for $k=1,...,550$. Note that in the second scenario, $x_2$ is considered as a weak confounder.  Results are summarized in Table \ref{tab:largep}; the {\it {Y-fit}} row refers to the estimator obtained by penalizing the outcome model using {\it SCAD} penalty.

We estimate the standard error of the treatment effect using an idea similar to \cite{chatterjee2011bootstrapping}. We bootstrap the sample and in each bootstrap force the components of the penalized estimator $\hat \eta$ to zero whenever they are close to zero and estimate the treatment effect using the selected covariates. More specifically, we define $\tilde \eta =\hat \eta I(|\hat \eta|> 1/\sqrt n )$. We utilize this thresholded bootstrap method to estimate the standard error of the treatment effect (S.D$^{tb}$). Although more investigation is required to validate the asymptotic behaviour of this method, our simulation results in Table \ref{tab:largep} show that the estimated standard error S.D$^{tb}$ is close to  the empirical estimator S.D$^{emp}$ (slightly underestimated).

 In the first scenario  there is no weak confounder and the  {\it {Y-fit}} is omitted since the result is similar to the SCAD row. The variance of the estimator in the {\it{PS-fit}} is too large due to the inclusion of  spurious variables that are not related to the response. The SCAD and LASSO estimators, however, are unbiased and perform as well as the oracle model. In the second scenario, the {\it {Y-fit}} estimator is bias because of under selecting the confounder $X_2$ while the proposed estimators using both SCAD and LASSO  remain unbiased.  Table \ref{tab:largep2} presents the average number of coefficients set to zero correctly or incorrectly under the second scenario. This, in fact, highlights the importance of our proposed method.

\begin{table} 
\caption{ {\small { Penalized ATE estimators based on the SCAD and LASSO penalty functions.  }}}\centering
\begin{tabular}{c c c  |c c c c c c} \hline
Method  & \multicolumn{1}{c}{Correct} & \multicolumn{1}{c}{Incorrect}
 & \multicolumn{1}{c}{Correct} & \multicolumn{1}{c}{Incorrect}\\
\hline
  & \multicolumn{2}{c}{$n=300$} & \multicolumn{2}{c}{$n=500$}\\
SCAD   &546 &0.05  &   546 &0.05    \\
LASSO &545 &0.00   &  546 &0.00   \\
 Y-fit      &546 &0.90   &   546 &0.92   \\           \hline
\end{tabular}
\label{tab:largep2}
\end{table}

In \ref{app:sim}, we examine the performance of our covariate selection estimation procedure when either of the working models of $g()$ or $h()$ is misspecified. Our results show that the proposed method outperforms both {\it {Y-fit}} and {\it {PS-fit}}.

\section{Application to Real Data } \label{sec:application}

In this section we examine the performance of our proposed method on the cross-country economic growth data used by \cite{doppelhofer2003determinants}. For   illustration purposes, we focus on a subset of the data which includes 88 countries and 35 variables. Additional details are provided in \cite{doppelhofer2009jointness}. We are interested in selecting non-ignorable variables which confound the effect of {\it life expectancy} (exposure variable) on the {\it average growth rate of gross domestic product per capita in 1960-1996} (outcome). 

The causal effect of life expectancy on economic growth is controversial. \cite{acemoglu2006disease} find no evidence of increasing the life expectancy on economic growth while \cite{husain2012alternative} shows that it might have positive effect. We dichotomize the life expectancy based on the observed median, which is 50 years.  Hence, the exposure  variable D=1 if life expectancy is below 50 years in that country and 0 otherwise.



\begin{table}
\caption{The economic growth data: List of significant variables. Penalized ATE estimators based on the SCAD and LASSO penalty functions. The two estimators PS-fit and Y-fit are obtained by penalizing the propensity score and outcome model via SCAD penalty, respectively. }\centering
\begin{tabular}{| l |rrrr|} \hline
  Variable   & Y-fit&PS-fit& SCAD& LASSO\\ \hline
 Ethnolinguistic Fractionalization &-0.39&-0.43& \textbf{-0.42}& \textbf{-0.33}\\
Population Density 1960 &-0.01 &0.00& \textbf{-0.16}& 0.00\\
 East Asian Dummy &0.48 &0.13&\textbf{0.53}& \textbf{0.45}\\
 Initial Income (Log GDP in 1960)    &0.00 &0.96&\textbf{0.19}& \textbf{0.15}\\ 
Public Education Spending Share    &0.05 &0.00&\textbf{0.13}&0.00\\ 
Nominal Government Share    &0.00 &0.00&\textbf{-0.18}&0.00  \\ 
Higher Education Enrolment &0.00 &0.23&0.00&0.00 \\ 
Investment Price    &-0.25 &0.00&\textbf{-0.24}& \textbf{-0.16} \\ 
Land Area Near Navigable Water    &0.00 &0.52&0.00&0.00\\ 
 Fraction GDP in Mining    &0.00 &0.00&\textbf{0.11}&0.00\\
 Fraction Muslim   &0.00 &-0.05&0.00&0.00\\
 Timing of Independence    &0.00 &-0.11&0.00&0.00\\
 Political Rights    &0.00 &-0.52&0.00&0.00\\
 Real Exchange Rate Distortions   &-0.04 &-0.04&\textbf{-0.20}&0.00\\
 Colony Dummy  &0.00 &-0.09&0.00&0.00\\
 European Dummy    &0.00 &0.00&\textbf{0.59}& \textbf{0.25}\\
 Latin American Dummy   &-0.18 &0.00&0.00&0.00\\
 Landlocked Country Dummy   &0.00 &0.00&\textbf{-0.21}&0.00\\ \hline
\end{tabular}
\label{tab:list}
\end{table}

We select the significant covariates  for the conditional mean and the treatment allocation models using the penalized likelihood (\ref{eq:Hahn}).  After covariate selection, we fit the model $\E[Y] = \theta s+ g(x;\gamma)$, where
$\theta$ is the treatment effect parameter (the function $g()$ assumed to be linear). Interaction or the higher order of the propensity score can be added to the response model if needed.

In our analysis, {\it{PS-fit}} and {\it{Y-fit}} refer to the cases where just the propensity score model and the conditional outcome models are penalized using SCAD to select the significant covariates (LASSO has a similar performance). 
Table \ref{tab:list} presents the list of variables and their estimated coefficients which are selected at least by one of the methods.

The proposed method selects 11 variables while {\it Y-fit} and {\it PS-fit} select 7 and 10 variables, respectively. This is mainly because of non-ignorable confounders which  either barely predict the outcome or treatment. More specifically, {\it Population Density 1960,   Initial Income, Public Education Spending Share,} and {\it Investment Price} are such non-ignorable confounders. 
Table \ref{tab:gdata} shows that although  the effect of life expectancy is positive, it is not significant.  Hence our results are consistent with \cite{acemoglu2006disease}. As we expected {\it PS-fit} results in inflating the standard error because  instrumental variables such as {\it Higher Education Enrollment}, {\it Land Area Near Navigable Water} and {\it Colony Dummy} are included. Also, including these variables in the propensity score causes bias. This is a confirmatory example of the result given by  \cite{de2011covariate} and \cite{abadie2006large}.

\begin{table}
\caption{The economic growth data: Penalized ATE estimators based on the SCAD and LASSO penalty functions. The two estimators PS-fit and Y-fit are obtained by penalizing the propensity score and outcome model via SCAD penalty, respectively.  }\centering
\begin{tabular}{|c|ccl|} \hline
  Method   & ATE & S.D.  & C.I.($\%95$) \\ \hline
SCAD &0.438 &0.405& (-0.372,1.248)\\
LASSO &0.451 &0.400&(-0.348,1.252) \\
Y-fit &0.394 &0.337& (-0.280,1.068) \\
PS-fit &0.774 &0.890&(-1.006,2.554)\\ \hline
\end{tabular}
\label{tab:gdata}
\end{table}

\section{Discussion} \label{sec:conclude}

We establish a two-step procedure for estimating the treatment effect in high-dimensional settings. First, we deal with the sparsity by penalizing a reparametrized conditional joint likelihood of the outcome and treatment given covariates. Then, the selected variables are used to form a double robust regression estimator of the treatment effect by incorporating the propensity score in the conditional expectation of the
response. The selected covariates may be used in other causal
techniques as well as the proposed regression method. 

Although, in high-dimensional cases, asymptotically penalizing the conditional outcome model given treatment and covariates is a valid variable selection approach in causal inference, it may perform poorly in finite sample by underselecting non-ignorable confounders which are weakly associated with outcome. Our proposed method improves the finite sample performance of the outcome penalization approach while maintaining the same asymptotic performance. The selected variables are used in a double robust regression estimator for estimating the treatment effect by
incorporating the propensity score in the conditional expectation of the
response.


 Any covariate selection procedure which involves the outcome variable affects the subsequent inference of the selected coefficients. This is because the selected model itself is stochastic and it needs to be accounted for. This is often referred to as ``post-selection inference'' in the statistical literature.   \cite{berk2012valid} proposes a method to produce a valid confidence interval for the coefficients of the selected model. In our setting, although we do not penalize the treatment effect, the randomness of the selected model affects the inference about the causal effect parameter through confounding. 
Moreover, note that the oracle property of the penalized regression estimators is a pointwise asymptotic feature and does not necessarily hold for all the points in the parameter space \citep{leeb2005model, leeb2008sparse}. In this manuscript, we assume that the parameter dimension ($r_2$) is fixed while the number of observation tends to infinity. One important extension to our work is to generalize the framework to cases where the tuple $(n,r_2)$ tends to infinity \citep{negahban2009unified}.  Analyzing the convergence of the estimated vector of parameters in the more general setting requires an adaptation of restricted eigenvalue condition \citep{bickel2009simultaneous} or restricted isometry property \citep{candes2007dantzig}.


\section*{Acknowledgment}
This research was supported in part by NIDA grant P50 DA010075. The second and third authors acknowledge the support of Discovery Grants from the Natural Sciences and Engineering Research Council (NSERC) of Canada. The authors are grateful to Professor Dylan Small for enlightening discussion.

\appendix

\makeatletter   

\newenvironment{app_item}{
\begin{itemize}
  \setlength{\itemsep}{0pt} \setlength{\parskip}{0pt}
  \setlength{\parsep}{0pt} \setlength{\topsep}{0pt}
  \setlength{\partopsep}{0pt}}{\end{itemize}}
 \makeatother

\section{Required conditions}
\label{app:conditions} In this Appendix, we prove the results stated in the text.
Here is the list of the regularity assumptions:
\begin{app_item}
\item C1. The parameter space $\Xi$ is a bounded open set in $\mathcal
    R^p$.
\item C2. The joint penalized density $f_p(z;\eta)$, where
    $z_i=(y_i,d_i,x_i)$ is a continuous function of $\eta$ on  $\Xi^c$ for
    almost all $z \in \mathcal Z$, where $\mathcal Z$ and $\Xi^c$ represent
    the sample space $(y_i,d_i,\bx_i)$ and the closure of $\Xi$ respectively.

\item C3. For all $\eta \in \Xi$ and all $\gamma>0$, $\kappa_{\eta} (\gamma)= \inf_{||\eta-\eta^*||>\gamma} r^2(\eta,\eta^*)>0,$
where $r^2(\eta,\eta^*)=\int_{\mathcal Z } [f^{1/2}(z;\eta)-f^{1/2}(z; \eta^{\ast})]^2 d\tau$.

\item C4. For $\eta \in \Xi^c$, $w_{\eta}(\delta)= \left[ \int_{\mathcal Z} \sup_{||h|| \leq \delta}\{  f^{1/2}(z;\eta)-f^{1/2}(z;\eta+h) \}^2d\tau\right] \rightarrow 0 \text{ as } \delta\rightarrow 0 .$
\item C5. $f(z;\eta)$ has finite Fisher information at each $\eta \in \Xi$.
\end{app_item}

Assumption $C3$ is  the identifiability condition, essentially requiring  that the distance
between the averaged densities over the response and the covariates for
two different values of the parameters $\eta$ and $\eta^*$ be
positive. Assumption $C4$ is referred to as the smoothness condition; it states that
the distance of the joint densities over $\eta$ and $\eta^*$ when  $\eta
\rightarrow \eta^*$ should approach zero as the sample size goes to infinity.


\section{Cases where $\alpha=0$ }
\label{app:measure}

Assume that $X$ is the only confounder/covariate. We conceptualize the following (true) Gaussian structural equation model: 
\begin{align*}
X&=\epsilon_1 \\
Z&=a_{12}X+\epsilon_2  \\
Y&=a_{13}X+a_{23}Z+\epsilon_3  \\
\end{align*}
where $(\epsilon_1,\epsilon_2,\epsilon_3)$ are generated  from a standard normal distribution.  
Since we are considering cases where $\alpha=0$, the penalty function can be ignored by assumption $P1$. Assume that the parameter $\beta$ in the reparametrized likelihood (3) is known and let $g(x,\alpha)=h(x,\alpha)=\alpha x$. Then by taking a derivative with respect to  $\alpha$ of the likelihood (3), $\alpha$  is defined as
\[
\alpha=\frac{cov(x,y)+cov(x,z)[1-\beta]}{cov(x,x)}
\]
Hence $\alpha=0$ iff 1) $cov(x,y)=cov(x,z)=0$, 2) $cov(x,y)=0$ \& $\beta=1$, or 3) $cov(x,y)+cov(x,z)[1-\beta]=0$. The latter is a drawback of our method, however, this particular data generating low has zero measure. Note that $cov(x,y)+cov(x,z)[1-\beta]=0$ implies that $a_{13}+a_{12}a_{23}+a_{12}[1-\beta]=0$. This is a hypersurface in the space of $(a_{13},a_{12},a_{23},\beta)$ and the set of distributions that satisfy this restriction has measure zero in $\mathbb{R}^4$.  

The same argument can be extended to the cases with more than one confounder. Then we have a union of a finite set of hypersurfaces.  Also, the same idea can be generalized to settings where variables are not normally distributed.

\section{ Lemmas}

\begin{lemma}
Let $Z_1,...,Z_n$ be independent and identically distributed with a density
$f(Z,\eta)$ that satisfies the conditions of C1-C4. If the
penalty function satisfies P3, then as $n \rightarrow \infty$
\begin{align}
R_n(\eta_2) =\prod_{i=1}^n \left[\frac{f_p(z_i;\eta_1,\eta_2)}{f_p(z_i;\eta_1,0)} \right]< 1, \hspace{.2in} \text{for } \eta_2\neq0.
\label{eq:inc}
\end{align}
\label{lem:inc}
\end{lemma}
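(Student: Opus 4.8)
The plan is to pass to logarithms, peel the penalty off the likelihood ratio, and control the two resulting pieces separately: the likelihood-ratio piece by a law of large numbers combined with the identifiability/smoothness conditions C3--C4, and the penalty piece by P1--P3, with P3 doing the essential work only in the regime where $\eta_2$ is close to the origin. Throughout, ``$R_n(\eta_2)<1$ as $n\to\infty$'' is read as holding with probability tending to one.

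First, since $p_{\lambda_n}(0)=0$ and $p_{\lambda_n}(\cdot)\ge 0$ by P1,
\[
\log R_n(\eta_2)=\sum_{i=1}^n\log\frac{f(z_i;\eta_1,\eta_2)}{f(z_i;\eta_1,0)}-n\,p_{\lambda_n}(\eta_2),\qquad p_{\lambda_n}(\eta_2):=\sum_{j}p_{\lambda_n}(|\eta_{2j}|)\ge 0,
\]
so it suffices to show the right-hand side is eventually negative. Writing $p_0(\cdot)=f(\cdot;\eta_0)$ with $\eta_0=(\eta_{01},0)$ for the data density and taking the reference denominator at the true value $\eta_1=\eta_{01}$ (this is the configuration in which the lemma is invoked in the sparsity argument, namely comparing the true-sparsity point to a non-sparse competitor), the strong law of large numbers gives $\frac1n\sum_i\log\frac{f(z_i;\eta_{01},\eta_2)}{p_0(z_i)}\to \E_{p_0}\log\frac{f(Z;\eta_{01},\eta_2)}{p_0(Z)}$ (the positive part of each summand has finite mean because $\log^+x\le x$ and each $f(\cdot;\eta)$ integrates to one; a $-\infty$ limit is permitted). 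By Jensen's inequality and the Hellinger identity $\int\sqrt{f(\cdot;\eta_{01},\eta_2)\,p_0}=1-\tfrac12\,r^2\big(\eta_0,(\eta_{01},\eta_2)\big)$ we get $\E_{p_0}\log\frac{f(Z;\eta_{01},\eta_2)}{p_0(Z)}\le -\,r^2\big(\eta_0,(\eta_{01},\eta_2)\big)$, which is strictly negative for every fixed $\eta_2\neq0$ by the identifiability condition C3 (apply C3 with $\gamma=\|\eta_2\|/2$). Hence $\frac1n\log R_n(\eta_2)\to \E_{p_0}\log\frac{f(Z;\eta_{01},\eta_2)}{p_0(Z)}-\lim_n p_{\lambda_n}(\eta_2)<0$, the penalty term being nonnegative; this settles every $\eta_2\neq0$ bounded away from $0$, and, by continuity of the Kullback--Leibler map together with C3, extends to $\eta_1$ ranging over a fixed neighbourhood of $\eta_{01}$ on which $\mathrm{KL}\big(p_0\,\|\,f(\cdot;\eta_1,0)\big)$ stays below $\inf_{\|\eta_2\|\ge\delta}\mathrm{KL}\big(p_0\,\|\,f(\cdot;\eta_1,\eta_2)\big)$.

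It remains to treat $\eta_2\neq0$ in a shrinking neighbourhood of $0$, and this is where P3 is indispensable: there the unpenalized term need not be negative. A second-order expansion of the log-likelihood ratio about $\eta_0$, legitimate because C4 forces the $L_2$ increments of $f^{1/2}$ to vanish uniformly, yields $\sum_i\log\frac{f(z_i;\eta_1,\eta_2)}{f(z_i;\eta_1,0)}=O_p\!\big(\sqrt n\,\|\eta_2\|+n(\|\eta_1-\eta_{01}\|+\|\eta_2\|)\|\eta_2\|\big)$. Against this, the mean value theorem and P1 give, once $\|\eta_2\|_\infty<B_n$, the lower bound $p_{\lambda_n}(\eta_2)=p_{\lambda_n}(\eta_2)-p_{\lambda_n}(0)\ge\big(\inf_{\xi\in N_n}p'_{\lambda_n}(\xi)\big)\|\eta_2\|_1$, and by P3 the factor $\inf_{\xi\in N_n}p'_{\lambda_n}(\xi)\to\infty$ while P2 controls $p''_{\lambda_n}$ on the complementary range; hence $n\,p_{\lambda_n}(\eta_2)$ dominates the $O_p(\sqrt n\,\|\eta_2\|)$ likelihood gain and $\log R_n(\eta_2)<0$ with probability tending to one. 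Patching the two regimes — uniformly in $\eta_1$ over the shrinking ball in which the MPL estimator is located by Theorem~\ref{th:PMLEexists} — gives the claim.

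The main obstacle is precisely this small-$\eta_2$ regime. Identifiability (C3) only supplies a strictly positive Hellinger separation when $\eta_2$ is bounded away from $0$, so for vanishing $\eta_2$ the unpenalized likelihood ratio can be of order one and even positive, and one must verify that the blow-up of $p'_{\lambda_n}$ near the origin (P3) outpaces it at every scale $\|\eta_2\|\in(0,\delta)$. This forces one to pair the penalty lower bound with a locally uniform expansion of the log-likelihood ratio that is sharp to the right order, which is exactly the content that C4 (smoothness) and P2 are there to provide; organizing the two scales cleanly, and obtaining uniformity in $\eta_1$, is the delicate part of the argument.
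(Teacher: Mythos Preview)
Your argument is sound, but it is organized differently from the paper's proof. The paper does not split into a ``far'' and a ``near'' regime at all: it invokes a single local asymptotic normality (LAN) expansion from Ibragimov and Has'minskii (their Theorem~II.1.1) to write $\log R_n(\eta_2)$ directly as a score term of order $O_p(n)\|\eta_2\|$, a quadratic Fisher-information term, a penalty term $-n\sum_j p'_{\lambda_n}(|\eta_j|)$, and a vanishing remainder, and then observes that P3 makes the penalty derivative dominate. In effect the paper treats only what you call the shrinking-$\eta_2$ regime, which is the sole case needed in the application to Theorem~1(a); your SLLN\,/\,Kullback--Leibler argument for $\eta_2$ bounded away from $0$ is a welcome addition that actually matches the lemma as stated, but it is not in the paper. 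Your near-zero argument (mean-value bound $p_{\lambda_n}(\eta_2)\ge\bigl(\inf_{N_n}p'_{\lambda_n}\bigr)\|\eta_2\|_1$ plus a quadratic expansion of the log-likelihood ratio, then P3 beating $O_p(\sqrt n\,\|\eta_2\|)$) is essentially the same mechanism as the paper's, reached without citing Ibragimov--Has'minskii explicitly. One small caveat: justifying the quadratic expansion from C4 alone is too quick---you need the finite-Fisher-information condition C5 (differentiability in quadratic mean) to get the LAN-type expansion, just as the paper's cited theorem does; the lemma's hypotheses list only C1--C4, but both your proof and the paper's implicitly use C5 here.
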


\begin{proof}
$R_n(\eta_2)$ can be written as

\[
\prod_{i=1}^n \left[\frac{f(z_i;\eta_1,\eta_2)e^{-\sum_{j=s}^p p_{\lambda_n}(|\eta_j|)}}{f(z_i;\eta_1,0)} \right].
\]
By theorem 1.1 in Chapter II of Ibragimov \& Has' Minskii (1981), it can be written as

\begin{align*}
R_n(\eta_2) = \exp\left [ \left\{\sum_{i=1}^n \frac{\partial \ln f_p(z_i;\eta_1,0)}{\partial \eta_2} \right\}||\eta_2||
-n\sum_{j=s}^p p'_{\lambda_n}(|\eta_j|)-\frac{1}{2} \eta_2I(\eta_1,0)\eta_2+\psi_n(\eta_2) \right],
\end{align*}
where $p(|\psi_n(\eta_2)|>\epsilon) \rightarrow 0$. Since $\sum _{i=1}^n
\partial \ln f(z_i;\eta_1,0)/\partial \eta_2=O_p(n)$, equivalent to the condition
$P3$, the desired inequality holds if
\[
\sum_{j=s}^p p'_{\lambda_n}(|\eta_j|) >||\eta_2|| = O_p(1) \qed
\]
Not that in our setting, $p_{\lambda_n}(|\eta_j|)= \frac{1}{|\eta_j|}p^*_{\lambda_n}(|\eta_j|)=\frac{\sqrt n}{O_p(1)}p^*_{\lambda_n}(|\eta_j|) $ where $p^*_{\lambda_n}(.)$ is one of the standard penalty functions such as LASSO or SCAD.
\end{proof}

The following Lemma is an adaptation of the results given by
Ibragimov \& Has' Minskii (1981), page 36.
\begin{lemma}
Suppose assumption C1-C4 are satisfied. Then for any fixed $\eta \in \Xi$ {\small{
\begin{align}
\E_{\eta} \left[ \sup_{\Gamma} \prod_{i=1}^n \frac{f_p^{1/2}(z_i;\eta+b)}{f_p^{1/2}(z_i;\eta)}   \right]
\leq \exp\left[ -\frac{n}{2}\left\{ \kappa_{\eta,n}(\frac{\gamma}{2}) -  2w_{\eta+b_0,n}(\delta) +
p_{\lambda_n}(|\eta+b^m|)-p_{\lambda_n}(|\eta|)  \right \}\right],
\label{eq:hesmin}
\end{align} }}\\
where $\Gamma$ is the sphere of radius $\delta$, situated in its entirely in
the region $||b||>\gamma/2$, $b_0$ is the center of $\Gamma$ and $\inf_{\Gamma}
p_{\lambda_n}(|\eta+b|)= p_{\lambda_n}(|\eta+b^m|)$. \label{lem:hesmin}
\end{lemma}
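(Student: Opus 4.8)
The plan is to adapt the classical bound from Ibragimov \& Has'minskii (1981, page 36) for the likelihood ratio supremum over a small sphere, and to carry the extra penalty factor $\exp\{-p_{\lambda_n}(|\eta+b|)\}/\exp\{-p_{\lambda_n}(|\eta|)\}$ along inertly. First I would write the penalized density as $f_p(z;\eta) = f(z;\eta)\exp\{-p_{\lambda_n}(|\eta|)\}$, so that
\[
\prod_{i=1}^n \frac{f_p^{1/2}(z_i;\eta+b)}{f_p^{1/2}(z_i;\eta)}
= \exp\left[-\tfrac{n}{2}\bigl(p_{\lambda_n}(|\eta+b|)-p_{\lambda_n}(|\eta|)\bigr)\right]\prod_{i=1}^n \frac{f^{1/2}(z_i;\eta+b)}{f^{1/2}(z_i;\eta)}.
\]
Over the sphere $\Gamma$, the penalty exponent is maximized (i.e.\ the factor is largest) when $p_{\lambda_n}(|\eta+b|)$ is smallest, which by definition occurs at $b^m$ with value $p_{\lambda_n}(|\eta+b^m|)$. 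Pulling this deterministic factor out of the supremum reduces the problem to bounding $\E_\eta[\sup_\Gamma \prod_i f^{1/2}(z_i;\eta+b)/f^{1/2}(z_i;\eta)]$ for the \emph{unpenalized} density.

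Next I would invoke the Ibragimov--Has'minskii argument for that unpenalized supremum. The key steps there are: (i) cover $\Gamma$ by its center $b_0$ and use a chaining/one-step expansion, writing $f^{1/2}(z;\eta+b) = f^{1/2}(z;\eta+b_0) + \{f^{1/2}(z;\eta+b)-f^{1/2}(z;\eta+b_0)\}$; (ii) apply the Cauchy--Schwarz inequality to split the expectation into a Hellinger-type distance term controlled by $\kappa_{\eta,n}(\gamma/2)$ (using that $\Gamma \subset \{\|b\|>\gamma/2\}$ and condition C3) and a modulus-of-continuity term controlled by $w_{\eta+b_0,n}(\delta)$ (using condition C4 and that $\Gamma$ has radius $\delta$); (iii) use independence to turn the per-observation bound $\E_\eta[f^{1/2}(z;\eta+b)/f^{1/2}(z;\eta)] = 1 - \tfrac12 r^2(\eta,\eta+b) + \cdots$ into the product bound $\exp[-\tfrac{n}{2}\{\kappa_{\eta,n}(\gamma/2) - 2w_{\eta+b_0,n}(\delta)\}]$, using $1-x \le e^{-x}$. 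Combining this with the penalty factor extracted above yields exactly the stated inequality.

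I would take some care with the bookkeeping of which base point the modulus of continuity is anchored at (the center $b_0$ versus $\eta$) and with the fact that conditions C1--C4 are stated for the \emph{joint} (misspecified) density $f$, so $r^2$ and $w_\eta$ as defined in Appendix~A apply verbatim; the $\kappa_{\eta,n}$ and $w_{\eta,n}$ with an $n$ subscript should be read as the finite-$n$ (empirical-process) analogues, consistent with how Lemma~\ref{lem:inc} and the surrounding theorems use them. The main obstacle is not the penalty factor --- that is an elementary deterministic manipulation --- but rather carefully reproducing the chaining/Cauchy--Schwarz step of the Ibragimov--Has'minskii estimate so that the sphere $\Gamma$ of radius $\delta$ sitting in the shell $\|b\|>\gamma/2$ produces precisely the two competing terms $\kappa_{\eta,n}(\gamma/2)$ and $2w_{\eta+b_0,n}(\delta)$ with the right constants; since this is verbatim the cited page-36 argument, I would present it by reduction to that reference rather than re-deriving the measure-theoretic details, noting only the modification needed to accommodate the non-negative penalty.
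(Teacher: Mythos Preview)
Your proposal is correct and follows essentially the same route as the paper's proof: separate the deterministic penalty factor (bounding it by its infimum at $b^m$), then bound the unpenalized square-root likelihood ratio using the one-step expansion around the center $b_0$, the Hellinger identity $\int f^{1/2}(z;\eta)f^{1/2}(z;\eta+b_0)\,d\tau = 1 - \tfrac12 r^2(\eta,\eta+b_0)$, Cauchy--Schwarz for the modulus-of-continuity term, and finally $1+a\le e^{a}$ to pass to the exponential. The only cosmetic difference is that the paper carries the penalty factor and the center/modulus bound simultaneously in a single inequality rather than in two separate steps, but the logic is identical.
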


\begin{proof}
The proof follows from the proof of Theorem 1.4.3 in Ibragimov \& Has' Minskii (1981).  Let
\[
R_n(b) = \prod_{i=1}^n \frac{f_p(z_i;\eta+b)}{f_p(z_i;\eta)}=
\prod_{i=1}^n \frac{f(z_i;\eta+b)e^{- p_{\lambda_n}(|\eta+b|)}}{f(z_i;\eta)e^{- p_{\lambda_n}(|\eta|)}}.
\]
We want to find an upper bound for the expectation $\E_{\eta} \left[
\sup_{\Gamma} R_n^{1/2}(b) \right]$ , where $\Gamma$ is the sphere of a radius
$\delta$ situated in its entirety in the region $||b||>\frac{1}{2}\gamma$. If
$b_0$ is the center of $\Gamma$, then
\begin{align*}
\sup_{\Gamma}R_n^{1/2}(b) &= \sup_{\Gamma} \prod_{i=1}^n \left[\frac{f(z_i;\eta+b)e^{- p_{\lambda_n}(|\eta+b|)}}{f(z_i;\eta)e^{- p_{\lambda_n}(|\eta|)}} \right]^{1/2} \leq  \prod_{i=1}^n \sup_{\Gamma} e^{ \frac{1}{2}p_{\lambda_n}(|\eta|)-\frac{1}{2}p_{\lambda_n}(|\eta+b|)} \\
& \prod_{i=1}^n f^{-1/2}(z_i;\eta) \left[ f^{1/2}(z_i;\eta+b_0) +\sup_{h\leq \delta} |  f^{1/2}(z_i;\eta+b_0+h)-f^{1/2}(z_i;\eta+b_0)    |\right].
\end{align*}
Thus,

\begin{align*}
\E_{\beta} \left[\sup_{\Gamma}R_n^{1/2}(b)\right] &\leq \prod_{i=1}^n \sup_{\Gamma} e^{ \frac{1}{2}p_{\lambda_n}(|\eta|)-\frac{1}{2}p_{\lambda_n}(|\eta+b|)} \left[ \int_{\mathcal Z} f^{1/2}(z_i;\eta)f^{1/2}(z_i;\eta+b_0) d\tau \right. \\
& \left.+ \int_{\mathcal Z} \sup_{|h|\leq \delta} f^{1/2}(z_i;\eta)| f^{1/2}(z_i;\eta+b_0+h)-f^{1/2}(z_i;\eta+b_0)    | )^n d\tau \right].
\end{align*}
We further note that

\begin{align}
\label{eq:lemproof1}
\int_{\mathcal Z} f^{1/2}(z;\eta)f^{1/2}(z;\eta+b_0)d\tau = &\frac{1}{2}  \left[\int_{\mathcal Z}f(z;\eta)d\tau+\int_{\mathcal Z}f(z;\eta+b_0)d\tau \right.\\      \nonumber
&\left. -\int_{\mathcal Z}[f^{1/2}(z;\eta)-f^{1/2}(z;\eta+b_0)]^2d\tau  \right] \\  \nonumber
& \leq 1-\frac{1}{2}r^2(\eta+b_0) \leq 1 - \frac{\kappa_{\eta}(\frac{\gamma}{2})}{2}
\end{align} \vspace{-.7in} \\
and

\begin{align}
\int \sup_{|h|\leq \delta} f^{1/2}(z_i;\eta)| f^{1/2}(z_i;\eta+b_0+h)-f^{1/2}(z_i;\eta+b_0)    |d\tau \leq w_{b_0}(\delta).
\label{eq:lemproof2}
\end{align}
The last inequality follows from the Cauchy-Schwarz inequality.
Finally,
\begin{align*} \E_{\beta} \left[ \sup_{\Gamma}R_n^{1/2}(b) \right] \leq
\exp\left[-\frac{n}{2}\left\{ \kappa_{\eta}(\frac{\gamma}{2})-2w_{b_0}(\delta)
+p_{\lambda_n}(|\eta+b^m|)-p_{\lambda_n}(|\eta|)\right\} \right]
\end{align*}
where $\sup_{\Gamma} e^{- p_{\lambda_n}(|\eta+b|)}=
e^{-p_{\lambda_n}(|\eta+b^m|)}$, using the inequality $1+a \leq e^a$. \qed
\end{proof}


\section{Proofs of Theorem \ref{th:norm} \& \ref{th:consis} }
\textbf{\it{Proof of Theorem \ref{th:norm}}}: 
\textbf{Part (a)} Consider $\eta_0=(\eta_{01},0)$ and partition $\eta=(\eta_1,\eta_2)$. We need to show that in the neighbourhood $||\eta-\eta_0||<O( h_n)$ where $h_n \rightarrow 0$ as $n \rightarrow \infty$,
\[
\prod_{i=1}^n \frac{f_p(z_i;\eta_1,\eta_2)}{f_p(z_i;\widehat \eta_1,0)} <1.
\]
It can be written as

\[
\prod_{i=1}^n \frac{f_p(z_i;\eta_1,\eta_2)}{f_p(z_i;\widehat \eta_1,0)} = \prod_{i=1}^n \left[\frac{f_p(z_i;\eta_1,\eta_2)}{f_p(z_i; \eta_1,0)} \right]\left[\frac{f_p(z_i;\eta_1,0)}{f_p(z_i;\widehat \eta_1,0)} \right]<\prod_{i=1}^n \frac{f_p(z_i;\eta_1,\eta_2)}{f_p(z_i; \eta_1,0)} <1.
\]
By the result of Lemma \ref{lem:inc}, the last inequality holds with
probability one as $n \rightarrow \infty$.


\textbf{\it{Part (b) }}: Under the conditions listed in the Theorem, we have
\begin{align*}
\sum_{i=1}^n \log f_p(z_i; \eta_{01}+\frac{c}{\sqrt n}) - \log f_p(z_i; \eta_{01}) =& \frac{1}{\sqrt n}  c' \sum_{i=1}^n \frac{\partial \log f(z_i; \eta_{01})}{\partial \eta_{01}} - c' \sqrt np'_{\lambda_n}(\alpha_{01}) \\
&-c' p''_{\lambda_n}(\alpha_{01}) c     - \frac{1}{2} c'I(\eta_{01})c + R_n(\eta_{01},c), \hspace{.1in} \text{ for $|c|<M$},
\end{align*}
where $f_p(.)$ is the penalized density defined in $\S$4.2 and $c$ is a constant vector. Note that $\eta_{01}=(\beta,\alpha_{01})$ and $\alpha_{01}$ is the true vector of
non-zero coefficients. Using the proof of Theorem 2.1.1 in \cite{ibragimov1981statistical}, one can show that $R_n(\eta_{01},c)\rightarrow 0$ in probability.  

Using the proof of Theorem 2.5.2 in \cite{MR1245941},  we can show that for any $\epsilon>0$
\[
P\left( \left|\sqrt n (\hat \eta_{01}-\eta_{01})- \frac{1}{\sqrt n} I^{-1}(\eta_{01}) \sum_{i=1}^n \frac{\partial \log f(z_i; \eta_{01})}{\partial \eta_{01}} + \sqrt np'_{\lambda_n}(\alpha_{01}) + p''_{\lambda_n}(\alpha_{01})    \right| >\epsilon \right) \rightarrow 0,
\]
as $n \rightarrow \infty$. Under assumption P2, it completes the proof of part (b). \qed

\textbf{\it{Proof of Theorem \ref{th:consis}}}: 
Using the triangle inequality,
\begin{align*}
|\zeta(\hat \theta_{M_n},M_n)-\zeta(\theta_{0},M_0)| \leq |\zeta(\hat \theta_{M_n},M_n)-\zeta(\hat \theta_{M_0},M_0)|+|\zeta(\hat \theta_{M_0},M_0)-\zeta( \theta_{0},M_0)|.
\end{align*}
By differentiability of the $\zeta(.,.)$ function in $\theta$, we have $\zeta(\hat \theta_{M_0},M_0) \CiP \zeta(\theta_{0},M_0)$. Also, $\forall t>0$, we have
\begin{align*}
 p(|\zeta(\hat \theta_{M_n},M_n)-\zeta(\hat \theta_{M_0},M_0)|>t) &\leq p(\{M_n=M_0\} \cap \{|\zeta(\hat \theta_{M_n},M_n)-\zeta(\hat \theta_{M_0},M_0)|>t\}) \\
 &+ p(\{M_n \neq M_0\} \cap \{|\zeta(\hat \theta_{M_n},M_n)-\zeta(\hat \theta_{M_0},M_0)|>t\})\\
 &\leq p(M_n \neq M_0) = 0
\end{align*}
The last inequality follows by the oracle property of our procedure (Theorem \ref{th:consis}). See also Theorem 4.2 in \cite{wasserman2009high}.  This completes the proof of weak consistency.

\section{Existence of the consistent penalized maximum likelihood estimator }
\label{app:PMLEexists}

\begin{theorem}
Under assumptions C1-C4 and P1-P3, the penalized maximum pseudo-likelihood estimator $\widehat \eta_{n}$ converges to $\eta_0$ as $n \rightarrow \infty$ almost surely where $\eta_0$ is the true parameter value with respect to (3).
\label{th:PMLEexists}
\end{theorem}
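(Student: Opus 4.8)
The plan is to carry out the classical Ibragimov--Has'minskii consistency argument for maximum-likelihood-type estimators, using Lemma~\ref{lem:hesmin} as the basic exponential inequality. Write the normalized penalized likelihood ratio
\[
Z_n(\eta)=\prod_{i=1}^n\frac{f_p(z_i;\eta)}{f_p(z_i;\eta_0)},
\]
so that $\widehat\eta_n$ maximizes $Z_n$ over $\Xi^c$ (the maximum is attained because $\Xi^c$, the closure of $\Xi$, is compact by C1 and $\eta\mapsto\prod_i f_p(z_i;\eta)$ is continuous on $\Xi^c$ for almost every $z$ by C2) and $Z_n(\eta_0)=1$. Since $\eta_0\in\Xi$, $Z_n(\widehat\eta_n)\ge Z_n(\eta_0)=1$, so it suffices to prove that, for every fixed $\gamma>0$,
\[
\sup_{\eta\in\Xi^c,\ \|\eta-\eta_0\|\ge\gamma}Z_n(\eta)<1\qquad\text{for all large }n,\ \text{almost surely,}
\]
which forces $\|\widehat\eta_n-\eta_0\|<\gamma$ eventually; intersecting these probability-one events over $\gamma=1/m$, $m\in\mathbb N$, then gives $\widehat\eta_n\to\eta_0$ a.s.

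Fix $\gamma>0$ and cover the compact set $K_\gamma:=\{\eta\in\Xi^c:\|\eta-\eta_0\|\ge\gamma\}$ by finitely many open balls $\Gamma_1,\dots,\Gamma_N$ of a common radius $\delta\in(0,\gamma/4)$, discarding any ball disjoint from $K_\gamma$; every retained ball then lies entirely in $\{\eta:\|\eta-\eta_0\|>\gamma/2\}$, so Lemma~\ref{lem:hesmin} applies to each $\Gamma_k$ with the fixed point taken to be $\eta_0$. Using $\sup_{K_\gamma}Z_n^{1/2}\le\sum_{k=1}^N\sup_{\Gamma_k}Z_n^{1/2}$, a union bound and Markov's inequality,
\[
\Pr\!\Big(\sup_{K_\gamma}Z_n\ge 1\Big)\ \le\ \sum_{k=1}^N\Pr\!\Big(\sup_{\Gamma_k}Z_n^{1/2}\ge \tfrac1N\Big)\ \le\ N\sum_{k=1}^N\E_{\eta_0}\!\Big[\sup_{\Gamma_k}Z_n^{1/2}\Big].
\]

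By Lemma~\ref{lem:hesmin} each expectation is at most $\exp\!\big[-\tfrac n2\{\kappa_{\eta_0}(\gamma/2)-2w_{c_k}(\delta)+p_{\lambda_n}(|\eta_0+b^{m}_{k}|)-p_{\lambda_n}(|\eta_0|)\}\big]$, where $c_k$ is the center of $\Gamma_k$. Here $\kappa_{\eta_0}(\gamma/2)>0$ is a fixed positive constant by the identifiability condition C3; $p_{\lambda_n}(|\eta_0+b^{m}_{k}|)\ge 0$ by P1; and $p_{\lambda_n}(|\eta_0|)\to0$, since the null coordinates of $\eta_0$ contribute nothing ($p_{\lambda_n}(0)=0$ by P1) while at the non-null coordinates the penalty vanishes under P2. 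Finally, by the smoothness condition C4 we may choose $\delta$ small enough (once, depending only on $\gamma$, which also freezes $N=N(\gamma,\delta)$) that $2w_c(\delta)<\tfrac14\kappa_{\eta_0}(\gamma/2)$ for every admissible center $c$; the required uniformity over centers follows from a Dini/compactness argument, as $w_\eta(\delta)$ is non-decreasing in $\delta$ and tends to $0$ on the compact set of candidate centers. Hence, for all large $n$, the bracketed quantity is at least $\tfrac14\kappa_{\eta_0}(\gamma/2)$, so that $\Pr(\sup_{K_\gamma}Z_n\ge1)\le N^2\exp\{-c_\gamma n\}$ with $c_\gamma:=\tfrac18\kappa_{\eta_0}(\gamma/2)>0$. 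This is summable in $n$, so Borel--Cantelli yields $\sup_{K_\gamma}Z_n<1$ for all large $n$ a.s., completing the argument sketched in the first paragraph.

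The step I expect to be the main obstacle is the standard tension between the two roles of $\delta$: it must be small so that $w_c(\delta)$ is negligible, but shrinking $\delta$ inflates the number $N(\delta)$ of balls, and one must be sure the geometric factor $e^{-c_\gamma n}$ still dominates $N(\delta)^2$. This is handled by the order of quantifiers --- for each fixed $\gamma$ one first chooses $\delta$ (hence fixes $N$), and only afterwards lets $n\to\infty$ --- together with the fact that C3 supplies a decay rate $\kappa_{\eta_0}(\gamma/2)$ independent of $\delta$; extracting a uniform-in-center modulus from C4 via compactness is the delicate technical point that legitimizes this. A secondary matter is bookkeeping for the $n$-dependent penalty, but it is benign: P1 makes it nonnegative at the perturbed points (so it can only improve the bound), while P2 forces $p_{\lambda_n}(|\eta_0|)\to0$, so the penalty contributes a vanishing perturbation to the exponent and never spoils the exponential decay. (It is also worth recording at the outset that $\eta_0\in\Xi$, so the limit point is interior and the stated convergence is meaningful; Lemma~\ref{lem:inc} may additionally be cited as a direct check that the coordinates of $\widehat\eta_n$ corresponding to null $\alpha$'s are driven to zero.)
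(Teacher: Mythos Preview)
Your proposal is correct and follows essentially the same Ibragimov--Has'minskii covering argument as the paper: cover $\{\|\eta-\eta_0\|\ge\gamma\}$ by finitely many small balls, apply Lemma~\ref{lem:hesmin} on each, control the exponent via C3--C4, and finish with Borel--Cantelli. The only cosmetic differences are that the paper bounds the penalty contribution by imposing $|p_{\lambda_n}(|\eta_0+b_k^m|)-p_{\lambda_n}(|\eta_0|)|\le\tfrac14\kappa_{\eta_0}(\gamma/2)$ directly (rather than your route through $p_{\lambda_n}\ge0$ and $p_{\lambda_n}(|\eta_0|)\to0$), and that your union bound carries an extra harmless factor of $N$; neither changes the substance.
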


\begin{proof}
For fixed $\gamma>0$, the
exterior of the sphere $\|\eta-\eta_0\|=\|b\|\leq \gamma $ can be covered by $N$
spheres $\Gamma_k$, $k=1,...,N$ of radius $\delta$ with centers $b_k$. The
small value $\delta$ is chosen such that (i) all the $N$ spheres are located in the
$||b||>\gamma/2$, (ii) $2w_{b_k}(\delta) \leq \kappa_{\eta}(\gamma/2)/4$ where $w()$ and $\kappa()$ are defined in $C3$ and $C4$, respectively, and (iii) $\forall b \in \Gamma_k$, $|p_{\lambda_n}(|\eta+b_k|)-p_{\lambda_n}(|\eta|)| \leq \kappa_{\eta}(\gamma/2)/4$. Let $u_k \in \Gamma_k$ so that $R(\hat u_k) = \sup_{u_k \in \Gamma_k} R(u_k)$. Then, in view of
the result of Lemma \ref{lem:hesmin}, we have
\begin{align*}
P(|\widehat \eta_{n} -\eta_0|>\gamma) &\leq \sum_{k=1}^N P(|\widehat \eta_{n} -\eta_0| \in \Gamma_k)  \leq \sum_{k=1}^N  P(\sup_{ u_k \in\Gamma_k} R_n( u_k)\geq R(0))  \\
& \leq \sum_{k=1}^N \exp \left[ -\frac{n}{2} \left\{ \kappa_{\eta}(\frac{\gamma}{2}) - 2w_{b_k}(\delta)  +p_{\lambda_n}(|\eta+b_k^m|)-p_{\lambda_n}(|\eta|)\right\} \right] \\
 & \leq N \exp \left[ -\frac{n}{2}\left\{ \kappa_{\eta}(\frac{\gamma}{2}) - \frac{1}{4}\kappa_{\eta}(\frac{\gamma}{2})  - \frac{1}{4}\kappa_{\eta}(\frac{\gamma}{2})\right\} \right],\\
 & \leq N \exp \left[ -\frac{n}{2}\left\{ \frac{1}{2}\kappa_{\eta}(\frac{\gamma}{2})\right\} \right],
\end{align*}
where $\sup_{b \in \Gamma_k} e^{- p_{\lambda_n}(|\eta+b|)}=e^{-p_{\lambda_n}(|\eta+b_k^m|)}$. Note that $R(0)=1$. The second inequality follows from the fact that when the MPL estimator $\hat \eta_n$ falls in at least one of the spheres $\Gamma_k$ where $\Gamma_k$ covers  outside of the neighborhood $\gamma/2$ of $\eta_0$, it means $\sup_{u_k\in \Gamma_k} \prod_{i=1}^n f_p(z_i;\eta_0+u_k)  \geq \prod_{i=1}^n f_p(z_i;\eta_0)$. By the definition of $R_n(u)$, this inequality can be written as $\sup_{ u_k \in\Gamma_k} R_n( u_k)\geq 1$.
Also the third inequality follows from Lemma S1 and
Markov's inequality.

Thus,

\[
P(|\widehat \eta_{n} -\eta_0|>\gamma) \leq N \exp \left[ -\frac{n}{4} \kappa_{\eta}(\frac{\gamma}{2}) \right],
\]
and hence we have strong consistency, as
\[
P\left( \bigcup_{m=n}^{\infty} |\widehat \eta_{2m}| \right) \leq
\frac{N \exp \left[ -\frac{n}{4} \kappa_{\eta}(\frac{\gamma}{2}) \right]}
{1- \exp \left[ -\frac{1}{4} \kappa_{\eta}(\frac{\gamma}{2}) \right]} \rightarrow 0 \text{ as } n\rightarrow \infty.
\qed
\]
\end{proof}

\section{Performance under model misspecifications}
\label{app:sim}
In this simulation study, we want to examine the performance of our proposed method when 1) either of the working models $g()$ or $h()$ are misspecified and 2) the number of potential confounders ($r_2$) is larger then the sample size.

\begin{enumerate}
\item[1.] $D \sim \text{Bernoulli}\left(\frac{\exp\{0.1x_1+x_2+0.7 \frac{x_{10}+x_{9}}{1+|x_8|}\}}{1+\exp\{0.1x_1+x_2+0.7 \frac{x_{10}+x_{9}}{1+|x_8|}\}}\right)$ \\ 
$Y \sim \text{Normal}(d+0.5x_1+0.1x_2+2x_3+2x_4, 2)$
\item[2.] $D \sim \text{Bernoulli}\left(\frac{\exp\{x_1-x_2-0.1x_{8}-x_{9}+x_{10}\}}{1+\exp\{x_1-x_2-0.1x_{8}-x_{9}+x_{10}\}}\right)$ \\ 
$Y \sim \text{Normal}(d+2x_8+2 \frac{\exp\{0.2x_3+0.2x_4\}}{\exp\{0.2|x_1|+0.2|x_2|\}}, 2)$
\end{enumerate}
where $\bX_k$ has a $N(0,2)$ for $k=1,...,550$.

In both scenarios, we consider linear working models for $g()$ and $h()$. Thus, at least one of them is misspecified. Table \ref{tab:miss} summarized the results. \textit{PS-fit} refers to the propensity score model including only the variables affecting treatment allocation (commonly done by practitioners) and {\it {Y-fit}} refers to the estimator obtained by penalizing the outcome model using SCAD penalty. We applied our variable selection procedure using the SCAD and LASSO penalties.   

In scenario 1, $x_2$ is a non-ignorable confounder which is weakly associated with the outcome. Ignoring this variable by {\it {Y-fit}} method results in bias which does not go zero by increasing the sample size.  \textit{PS-fit} method, in scenarios 1 \& 2, ignores  the non-ignorable confounders $x_1$ and $x_8$, respectively, which leads to a bias treatment effect estimate. Our proposed method using SCAD and LASSO outperforms all the other methods by increasing the chance of including all the confounders (weak or strong) in the model.

\begin{table}
 \caption{\label{tab:miss} Performance of the proposed method when either of the response or treatment models are misspecified and $r_2>n$.}\centering
\begin{tabular}{lrrr|rrr|} \hline
Method   &\multicolumn{1}{c}{Bias} & \multicolumn{1}{c}{S.D.} & \multicolumn{1}{c|}{MSE} &
\multicolumn{1}{c}{Bias} & \multicolumn{1}{c}{S.D.} & \multicolumn{1}{c}{MSE}  \\ \hline
Scenario 1.               & \multicolumn{3}{c}{$n=300$} & \multicolumn{3}{c}{$n=500$} \\
SCAD  & 0.036& 0.453 &0.206&   0.047 &0.309 &0.097 \\
LASSO    &  0.209  &0.456 &0.252&0.057 &0.425&  0.184          \\
Y-fit   &         0.109 &0.360 &0.142&0.121& 0.290 &0.099 \\
PS-fit   &          0.227 &1.033 &1.118&  0.157 &0.805 &0.665 \\ \hline
Scenario 2.               & \multicolumn{3}{c}{$n=300$} & \multicolumn{3}{c}{$n=500$} \\
SCAD       &0.116 &0.358 &0.142&0.091 &0.319 &0.110 \\
LASSO     &    0.110 & 0.377 &0.154 &0.016 &0.288 &0.083         \\
Y-fit    &         0.185 &0.398 &0.193&0.205 &0.291 &0.126 \\
PS-fit   &          0.737 &0.918 &1.387& 0.673& 0.768 &1.044\\\hline 
\end{tabular}
\end{table}

\bibliographystyle{Biometrika}
\bibliography{mybib-1}

\end{document}